\numberwithin{equation}{section} 
\numberwithin{figure}{section} 
\theoremstyle{plain}
\newtheorem{thm}{Theorem}
  \theoremstyle{plain}
  \newtheorem{prop}[thm]{Proposition}
  \theoremstyle{plain}
  \newtheorem{lem}[thm]{Lemma}
\begin{document}

\title{Inverses, Powers and Cartesian products of topologically deterministic
maps}

\author{Michael Hochman and Artur Siemaszko}
\begin{abstract}
We show that if $(X,T)$ is a topological dynamical system with is
deterministic in the sense of Kami\'{n}ski, Siemaszko and Szyma\'{n}ski
then $(X,T^{-1})$ and $(X\times X,T\times T)$ need not be determinstic
in this sense. However if $(X\times X,T\times T)$ is deterministic
then $(X,T^{n})$ is deterministic for all $n\in\mathbb{N}\setminus\{0\}$.
\end{abstract}

\subjclass[2000]{37B40, 54H20}

\thanks{M.H. supported by NSF grant 0901534. \\
A.S. supported by MNiSzW grant N201384834.}

\curraddr{Michael Hochman, Fine Hall, Washington Road, Princeton University,
Princeton, NJ 08544, USA }

\email{hochman@math.princeton.edu}

\curraddr{Artur Siemaszko, Faculty of Mathematics and Computer Science, University
of Warmia and Mazury in Olsztyn, ul. \.{Z}o\l{}nierska 14A, 10-561
Olsztyn, Poland}

\email{artur@uwm.edu.pl}

\maketitle

\section{Introduction}

By a topological dynamical system we mean a pair $(X,T)$, where $X$
is a compact metric space, and $T:X\rightarrow X$ an onto continuous
map. A factor map between systems $(X,T)$ and $(Y,S)$ is a continuous
onto map $\pi:X\rightarrow Y$ satisfying $S\pi=\pi T$. 

This note concerns systems $(X,T)$ which are topologically deterministic
(TD): i.e., whenever $(Y,S)$ is a factor of $(X,T)$, the map $S$
is invertable. This notion was introduced by Kami\'{n}ski, Siemaszko
and Szyma\'{n}ski in \cite{KaminskiSiemaszkoSzymanski03} as a natural
topological analogue of determinism in ergodic theory, which can be
defined similarly. Most work to date has focused on the relation of
TD and topological entropy, see \cite{KaminskiSiemaszkoSzymanski03,Hochman2010c}.
A relative version, analogous to the relative entropy theory, was
introduced in \cite{KaminskiSiemaszkoSzymanski2005}. Our purpose
here is to study some other basic properties of TD systems, namely,
the relation between determinism of $(X,T)$ and determinism of the
systems $(X,T^{n})$ and $(X\times X,T\times T)$. 

In the ergodic category, i.e. for measurable transformations $T$
preserving a probability measure $\mu$, the analogous notion of determinism
is that every measurable factor is invertible, and this is well-known
to be equivalent to the vanishing of the Kolmogorov-Sinai entropy.
Since $h(T^{n},\mu)=|n|h(T,\mu)$, $n\in\mathbb{Z}\setminus\{0\}$,
and $h(T\times T,\mu\times\mu)=2h(T,\mu)$, the vanishing of any one
of these implies the same for the others, and hence determinism of
$T$, $T^{n}$ and $T\times T$ are equivalent. In the topological
category, determinism is not equivalent to zero topological entropy,
and, as it turns out, the relation between determinism of powers and
products is more tenuous.
\begin{thm}
\label{thm:inverse}There exist TD systems $(X,T)$ such that $(X,T^{-1})$
is not TD. 
\end{thm}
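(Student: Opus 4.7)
The plan is to construct an explicit homeomorphism $T$ of a compact metric space $X$ such that every factor of $(X,T)$ has invertible induced map (so $(X,T)$ is TD) while some factor of $(X,T^{-1})$ does not (so $(X,T^{-1})$ is not TD). The starting observation is that factors of $(X,T)$ correspond to closed equivalence relations $R\subseteq X\times X$ with $(T\times T)(R)\subseteq R$, and the induced map on the quotient is invertible iff $R$ is in fact bi-invariant, i.e. $(T\times T)(R)=R$. Thus $(X,T)$ is TD precisely when every closed $T$-forward-invariant equivalence relation on $X$ is also $T^{-1}$-invariant.

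A naive first attempt already fails, and pinpoints the structural obstruction one must avoid. For example, with $X=\mathbb{Z}\cup\{\infty\}$ (the one-point compactification) and $T(n)=n+1$, $T(\infty)=\infty$, the closed set $C=\{0,1,2,\ldots\}\cup\{\infty\}$ satisfies $T(C)\subsetneq C$; collapsing $C$ to a point and keeping negative integers as singletons gives a closed $T$-forward-invariant relation that is not $T^{-1}$-invariant, so $(X,T)$ is not TD. To avoid such ``chop-off'' factors, I am led to require $(X,T)$ to be a \emph{minimal} invertible system, since minimality rules out any proper closed forward-invariant subset of $X$, and hence every factor of this naive form.

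Accordingly I would construct $(X,T)$ as a minimal invertible system with asymmetric forward/backward structure, for instance a primitive substitution subshift or a Toeplitz subshift chosen so that $(X,T)\not\cong(X,T^{-1})$. The argument then has four steps: describe $X$ and $T$ explicitly; verify $T$ is a homeomorphism of a compact metric space; classify all closed $T$-invariant equivalence relations on $X$ and show each is bi-invariant, thereby proving $(X,T)$ is TD; and exhibit an explicit closed equivalence relation on $X$ that is $T^{-1}$-invariant but not $T$-invariant, witnessing the failure of TD for $(X,T^{-1})$. The relation in the last step should identify pairs of points whose \emph{backward} orbits are asymptotic in a way that is broken under $T$.

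The main technical difficulty is the third step: ruling out all non-invertible factors of $(X,T)$. Even in a minimal system, non-invertible factors can in principle arise from factor maps that fail to be open, so one must genuinely classify the factor structure of the chosen $(X,T)$. In the substitution/Toeplitz setting this is done using the explicit combinatorial structure together with the theory of the maximal equicontinuous factor and intermediate almost 1--1 extensions. The construction has to be delicate enough that the classification can be carried out cleanly, yet retain enough forward/backward asymmetry for the explicit non-invertible $(X,T^{-1})$-factor in step four.
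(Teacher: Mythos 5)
Your plan founders at step three, and for a structural reason: the systems you propose to use can never be TD. For any infinite subshift $X$ over a \emph{finite} alphabet (in particular any primitive substitution subshift or infinite Toeplitz subshift), consider the closed equivalence relation $R=\{(x,y)\in X\times X\,:\,x(i)=y(i)\text{ for all }i\geq0\}$, i.e.\ the relation defining the factor onto the associated one-sided subshift. It is clearly closed and satisfies $T\times T(R)\subseteq R$, so it lies in $ICER^{+}(X)$; it lies in $ICER(X)$ only if any two points with the same future agree everywhere. But an infinite subshift always contains distinct points with identical futures: the word-complexity $p(n)$ is strictly increasing, hence there are left-special words of every length, and a compactness argument yields $x\neq y$ with $x(i)=y(i)$ for all $i\geq0$ and $x(-1)\neq y(-1)$. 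Thus $ICER^{+}(X)\neq ICER(X)$ and $(X,T)$ is not TD, no matter how cleverly the substitution or Toeplitz structure is chosen. In other words, the non-open, non-invertible factor you yourself flag as the ``main technical difficulty'' is not merely difficult to rule out in your setting --- it is always present, so no classification of factors can succeed. Minimality only excludes the ``chop-off'' factors coming from proper closed forward-invariant subsets; it does nothing against this one. (Beyond this, the proposal is a programme rather than a proof: steps three and four, which carry all the content, are not carried out.)

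The paper's construction avoids exactly this trap by leaving the finite-alphabet, expansive world and abandoning minimality. It builds a point $x^{*}\in[0,1]^{\mathbb{N}}$ (alphabet the whole interval) whose two-sided orbit closure $X$ is \emph{pointwise rigid}: there is a sequence $n_{k}\to\infty$ with $T^{n_{k}}z\to z$ for every $z\in X$. Then every pair in $X\times X$ is forward recurrent for $T\times T$, which forces every relation in $ICER^{+}(X)$ to be invariant, so $(X,T)$ is TD with no need to classify factors. Failure of TD for $(X,T^{-1})$ is then obtained softly: a TD system must have every point forward recurrent, while $X$ contains the fixed point $\overline{0}$ together with a point of the form $\ldots0001y'$ whose backward orbit converges to $\overline{0}$; that point is not forward recurrent for $T^{-1}$. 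Note that the witness for non-TD of the inverse is precisely this non-minimal feature (a fixed point with a backward-asymptotic companion), so insisting on a minimal example makes the task strictly harder than the theorem requires, and whether a minimal example exists is not settled by your outline.
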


\begin{thm}
\label{thm:product}There exists TD systems $(X,T)$ such that $(X\times X,T\times T)$
is not TD.
\end{thm}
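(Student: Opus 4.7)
The plan is to construct an explicit counterexample, rather than to try to derive Theorem~\ref{thm:product} formally from Theorem~\ref{thm:inverse}. The two failures of TD (for $T^{-1}$ versus for $T\times T$) arise from structurally different classes of factors, and there is no obvious implication between them. For instance, given $\pi:X\to Y$ with non-invertible $S:Y\to Y$ satisfying $\pi T^{-1}=S\pi$ as in Theorem~\ref{thm:inverse}, the natural attempts to build a factor of $T\times T$ from this data---such as $(x,y)\mapsto(\pi x,\pi y)$, or $(x,y)\mapsto(x,\pi y)$---fail to be factor maps at all, because the identity $\pi T^{-1}=S\pi$ only lets $\pi$ intertwine the backward dynamics; since $\pi$ need not factor $T$, the induced map on the candidate quotient is not well-defined on the whole space. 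So a direct construction is required.

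The construction I would attempt is symbolic, in the spirit of \cite{KaminskiSiemaszkoSzymanski03,Hochman2010c}: take $X$ to be a subshift (or almost-one-to-one extension thereof) chosen so that $T$ is forward-deterministic in a strong sense, while two independent copies admit a joint ``synchronization'' factor that is not present in a single copy. Concretely, I would look for a continuous equivalence relation on $X\times X$ that identifies pairs $(x,y)\sim(x',y')$ when they agree in a certain coordinated feature involving both coordinates, where the feature is merged by forward iteration of $T\times T$ but split apart by backward iteration. The quotient $Z=(X\times X)/\sim$ then carries a continuous induced map $R:Z\to Z$ that is onto but not injective, witnessing failure of TD for $(X\times X,T\times T)$.

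The heart of the argument, and its main obstacle, is to verify the two halves simultaneously: (a) that $(X,T)$ is TD---a universal statement that requires analyzing \emph{all} continuous factors of $(X,T)$ and showing their induced maps are homeomorphisms---and (b) that the specific proposed factor of $X\times X$ really has non-invertible induced map. Part (a) is typically the hard direction; the usual approach is to use minimality, combinatorial rigidity of the coding, or properties of the tower/substitution used in the construction to show every factor equivalence relation of $(X,T)$ must respect the forward orbit structure tightly enough to force invertibility. Part (b) will reduce to exhibiting an explicit pair of $\sim$-inequivalent points whose images under $T\times T$ are $\sim$-equivalent---something that should follow by design from the synchronization chosen in the construction.
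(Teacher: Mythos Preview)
Your proposal is not a proof but a research outline, and it is missing the two structural ideas that make the paper's construction work.

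First, on showing that $(X\times X,T\times T)$ is \emph{not} TD: you propose to manufacture an explicit closed forward-invariant equivalence relation on $X\times X$ that is not fully invariant. This is harder than necessary. By Lemma~\ref{lem:TD-implies-recurrence}, it suffices to produce a single point of $X\times X$ that is not forward recurrent for $T\times T$. The paper arranges this directly: the system is built so that it contains two points $x^*,y^*$ with $x^*(0)=y^*(0)=1$ but $\min(x^*(k),y^*(k))=0$ for all $k\neq 0$; then $(x^*,y^*)$ can never return close to itself under $T\times T$, and non-TD of the product follows immediately. Your ``synchronization factor'' idea would have to be made concrete, and there is no indication of how.

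Second, and more seriously, on showing that $(X,T)$ \emph{is} TD: you say this is ``typically the hard direction'' and gesture at minimality or combinatorial rigidity. But the only general sufficient condition available in the paper for TD is Lemma~\ref{lem:recurrence-in-XxX-implies-TD}, which requires every pair in $X\times X$ to be forward recurrent---and this is precisely what you are arranging to fail. The paper resolves this tension with a new, weaker sufficient condition (the lemma opening Section~\ref{sec:proof-of-product}): $(X,T)$ is TD provided that for every non-recurrent pair $(x',x'')$ there is some $p$ with both $(x',p)$ and $(p,x'')$ in $\omega_{T\times T}(x',x'')$; transitivity of the equivalence relation then forces $(x',x'')\in TR$. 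The construction is tailored to this criterion: $X$ and $Y$ are separately uniformly rigid along sequences $m_k$ and $n_k$ respectively, with sparseness conditions guaranteeing that iterating any $x\in X$ along (multiples of) $n_k$ drives it to the fixed point $\overline{0}$, and symmetrically for $y\in Y$ along $m_k$. Thus for any mixed pair $(x,y)$ one gets $(x,\overline{0}),(\overline{0},y)\in\omega_{T\times T}(x,y)$ with $p=\overline{0}$.

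Without this weakened TD criterion and the two-sequence rigidity/sparseness mechanism, your plan has no way to certify TD of $(X,T)$ while simultaneously breaking recurrence in $X\times X$. That is the genuine gap.
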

On the other hand,
\begin{prop}
\label{pro:positive-powers}If $(X\times X,T\times T)$ is TD then
$(X,T^{n})$ is TD for all $n\geq1$.
\end{prop}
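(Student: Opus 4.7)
The plan is to start from an arbitrary factor map $\pi:(X,T^{n})\to(Y,S)$ and show that $S$ is injective, surjectivity being built into the definition of a factor. The key construction packages $\pi$ together with its first $n$ shifts into a single factor of $(X,T)$: I would define $\tilde{\pi}:X\to Y^{n}$ by $\tilde{\pi}(x)=(\pi(x),\pi(Tx),\ldots,\pi(T^{n-1}x))$ and $\sigma:Y^{n}\to Y^{n}$ by $\sigma(y_{0},\ldots,y_{n-1})=(y_{1},\ldots,y_{n-1},Sy_{0})$. The relation $\pi T^{n}=S\pi$ then gives $\tilde{\pi}T=\sigma\tilde{\pi}$, so $\tilde{\pi}$ is a factor map $(X,T)\to(Z,\sigma)$ with $Z=\tilde{\pi}(X)\subseteq Y^{n}$, and a straightforward induction shows that $\sigma^{n}$ acts on $Z$ as the coordinatewise application of $S$.

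Next I would extract invertibility from the hypothesis. Since $(X\times X,T\times T)$ is TD, so is $(X,T)$: any putative factor $\rho:(X,T)\to(V,R)$ extends via $\rho\times\rho$ to a factor $(X\times X,T\times T)\to(V\times V,R\times R)$, and TD of the product forces $R\times R$, hence $R$, to be invertible. Consequently $(Z,\sigma)$, as a factor of $(X,T)$, is itself TD, so $\sigma|_{Z}$ is a homeomorphism. In particular $\sigma|_{Z}$ is injective: whenever two elements of $Z$ share their last $n-1$ coordinates and have $S$-equal first coordinates, they coincide.

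The final and delicate step is to promote this restricted injectivity to injectivity of $S$ on $Y$. Given $y,y'\in Y$ with $Sy=Sy'$, I would seek $x,x'\in X$ with $\pi(x)=y$, $\pi(x')=y'$, and $\pi(T^{i}x)=\pi(T^{i}x')$ for $i=1,\ldots,n-1$; such a matched pair would yield two elements of $Z$ identical in their last $n-1$ slots with first slots mapping to the same value under $S$, and injectivity of $\sigma|_{Z}$ would then force the tuples (and hence $y$ and $y'$) to coincide. Producing such matched pairs is the main obstacle, and it is precisely where the full strength of $(X\times X,T\times T)$ being TD must be exploited beyond merely yielding $(X,T)$ TD. The natural route is to build a cleverer factor of $(X\times X,T\times T)$ that directly encodes the desired partial-matching condition — for example by combining $\tilde{\pi}\times\tilde{\pi}:(X\times X,T\times T)\to(Z\times Z,\sigma\times\sigma)$ with a carefully chosen equivariant quotient of $Z\times Z$ — so that invertibility of its induced action yields the matching. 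An alternative I would try in parallel is a recurrence/minimality argument on the $T\times T$-orbit closure of an alleged offending pair $(x_{0},x_{0}')$ with $\pi(x_{0})=y$, $\pi(x_{0}')=y'$, to produce a matched limit point inside $\bigcap_{i=1}^{n-1}(T\times T)^{-i}D$, where $D=\{(x,x'):\pi(x)=\pi(x')\}$.
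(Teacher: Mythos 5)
Your construction of $(Z,\sigma)$ and the conclusion that $\sigma|_{Z}$ is a homeomorphism are correct, but note that up to that point you have used only the fact that $(X,T)$ is TD (which does follow from TD of the product). The entire content of the proposition is therefore concentrated in the step you leave open: producing, for $y,y'\in Y$ with $Sy=Sy'$, preimages $x,x'$ with $\pi(T^{i}x)=\pi(T^{i}x')$ for $i=1,\ldots,n-1$. This is a genuine gap, not a routine verification: the paper explicitly remarks that it is not known whether TD of $(X,T)$ alone implies TD of $(X,T^{n})$, so no argument that, like your completed steps, only invokes TD of $(X,T)$ can finish the proof, and neither of your two sketches closes the gap. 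In particular the recurrence sketch as stated does not work: a limit point of the $T\times T$-orbit of $(x_{0},x_{0}')$ lying in $\bigcap_{i=1}^{n-1}(T\times T)^{-i}D$ will in general no longer project under $\pi\times\pi$ to the pair $(y,y')$, so ``matching'' at that limit point says nothing about $y$ and $y'$.

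The way to use the product hypothesis is through pointwise recurrence rather than through matching, and this is the paper's route: by Lemma \ref{lem:TD-implies-recurrence} applied to $(X\times X,T\times T)$, every pair is forward recurrent for $T\times T$; by Lemma \ref{lem:forward-recurrence-of-powers} it is then forward recurrent for $(T\times T)^{n}=T^{n}\times T^{n}$; and Lemma \ref{lem:recurrence-in-XxX-implies-TD}, applied to $T^{n}$, gives that $(X,T^{n})$ is TD --- with no need for the $(Z,\sigma)$ machinery. If you prefer to keep your injectivity scheme, the same recurrence makes the matching step unnecessary: given $Sy=Sy'$, choose any preimages $x_{0},x_{0}'$, take $m_{k}\to\infty$ with $(T^{nm_{k}}x_{0},T^{nm_{k}}x_{0}')\to(x_{0},x_{0}')$, and apply $\pi\times\pi$: then $S^{m_{k}}y\to y$ and $S^{m_{k}}y'\to y'$, while $S^{m_{k}}y=S^{m_{k}}y'$ because $Sy=Sy'$ and $m_{k}\geq1$; hence $y=y'$.
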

It is not clear as yet whether determinism of $(X,T)$ implies the
same for $(X,T^{n})$, $n\geq1$, although the converse is trivially
true, i.e. determinism of $(X,T^{n})$ for any $n>1$ implies it for
$(X,T)$.

In the next section we prove the proposition. In sections \ref{sec:Proof-of-inverse},
\ref{sec:proof-of-product} we give the constructions which prove
theorems \ref{thm:inverse}, \ref{thm:product}, respectively.

\section{\label{sec:Basic-properties}Basic properties of TD systems}

For general background on topological dynamics see e.g. \cite{Walters82}.
Given a system $(X,T)$ and $x\in X$ we write \[
\omega_{T}(x)=\bigcap_{n=1}^{\infty}\overline{\bigcup_{k\geq n}T^{k}x}\]

Let $T\times T$ denote the diagonal map on $X\times X$: i.e., $T\times T(x',x'')=(Tx',Tx'')$.
Let $CER(X)$ denote the space of closed equivalence relations on
$X$, and $ICER(X)$ for the invariant ones, i.e. \[
ICER(X)=\{R\in CER(X)\,:\, T\times T(R)=R\}\]
Also write $ICER^{+}(X)$ for the forward invariance equivalence relations:
\[
ICER^{+}(X)=\{R\in CER(X)\,:\, T\times T(R)\subseteq R\}\]
There is a bijection between factors of $(X,T)$ and members of $ICER^{+}(X)$,
given by the partition induced by the factor map. The image system
is invertable if and only if the corresponding relation is in $ICER(X)$.
It follows that \cite{KaminskiSiemaszkoSzymanski03}:
\begin{prop}
\label{pro:ICER-characterization}$(X,T)$ is TD if and only if $ICER^{+}(X)=ICER(X)$.
\end{prop}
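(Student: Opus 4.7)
The plan is to use Proposition~\ref{pro:ICER-characterization} on both sides: the hypothesis that $(X\times X,T\times T)$ is TD means that $ICER^{+}(X\times X)=ICER(X\times X)$ with respect to $T\times T$, and what we must show is that $ICER^{+}(X,T^{n})=ICER(X,T^{n})$ with respect to $T^{n}\times T^{n}$. Fix $R\in ICER^{+}(X,T^{n})$; the goal is to produce an equivalence relation $\tilde R$ on $X\times X$ lying in $ICER^{+}(X\times X,T\times T)$ such that the full $(T\times T)$-invariance of $\tilde R$, supplied by the hypothesis, forces $(T^{n}\times T^{n})(R)=R$.

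As a warm-up and a tool, I would first record that the first-coordinate projection $(X\times X,T\times T)\to(X,T)$ is a factor map, so $(X,T)$ is TD as a factor of a TD system; hence every closed $(T\times T)$-forward-invariant equivalence relation on $X$ is already $(T\times T)$-invariant. This already disposes of $n=1$ and, more importantly, it shows that the $T$-invariant ``core'' $R^{\flat}:=\bigcap_{k\geq 0}(T\times T)^{-k}(R)$ of our given $R$ is fully $T$-invariant. The obstruction to running this directly on $R$ is that $R$ itself need not be $(T\times T)$-forward invariant, only $(T^{n}\times T^{n})$-forward invariant, so the bare product relation $R\boxplus R$ on $X\times X$ (where $((a,b),(c,d))\in R\boxplus R$ iff $(a,c),(b,d)\in R$) is closed and an equivalence relation but lies only in the $(T\times T)^{n}$-forward-invariant class.

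To correct this, I would let $\tilde R$ be the smallest closed equivalence relation on $X\times X$ containing $R\boxplus R$ and invariant under the diagonal action of $T\times T$, concretely the closed equivalence closure of
\[
\bigcup_{k=0}^{n-1}(T^{k}\times T^{k})(R)\boxplus (T^{k}\times T^{k})(R).
\]
Since $(T^{n}\times T^{n})(R)\subseteq R$, this union is effectively cyclic under $T\times T$, and one verifies $\tilde R\in ICER^{+}(X\times X,T\times T)$. The hypothesis then upgrades this to $\tilde R\in ICER(X\times X,T\times T)$; equivalently, for every $((a,b),(c,d))\in\tilde R$ there is a $(T\times T)$-preimage in $\tilde R$, and iterating $n$ times gives a $(T^{n}\times T^{n})$-preimage in $\tilde R$.

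The main obstacle, and the technical heart of the argument, is the last step: extracting from full $(T\times T)$-invariance of $\tilde R$ the conclusion $(T^{n}\times T^{n})(R)=R$. The plan is to apply the invariance to an element of the generating set $R\boxplus R$ and to show that $n$ successive $(T\times T)$-preimages in $\tilde R$ can be chosen so that the two coordinate pairs land back in $R$. This should use both the synchronized nature of the product action (which aligns the two coordinates across all $n$ preimage steps) and the TD of $(X,T)$ applied to the auxiliary relation $R^{\flat}$, which controls how far the preimage chain can drift away from $R\boxplus R$ inside $\tilde R$. Getting this bookkeeping right, so that the preimage of a pair $(a,c)\in R$ obtained from the product invariance is itself a pair in $R$, is the delicate part of the proof.
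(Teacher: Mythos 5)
Your proposal does not address the statement it is supposed to prove. The statement under review is Proposition \ref{pro:ICER-characterization} itself, the equivalence ``$(X,T)$ is TD if and only if $ICER^{+}(X)=ICER(X)$''. Your text instead takes this equivalence as a known tool (``the plan is to use Proposition~\ref{pro:ICER-characterization} on both sides'') and attempts to prove a different result, namely Proposition \ref{pro:positive-powers}, that TD of $(X\times X,T\times T)$ implies TD of $(X,T^{n})$. Since the task was to prove the characterization, invoking it as a black box is circular, and nothing in your argument touches what actually needs proving: the correspondence between factors of $(X,T)$ and closed forward-invariant equivalence relations. A proof of the proposition runs along the following lines: given $R\in ICER^{+}(X)$, the quotient $X/R$ is a compact metric space, the quotient map $\pi$ is continuous and onto, and forward invariance of $R$ makes the induced map $S[x]=[Tx]$ well defined, so $R$ yields a factor of $(X,T)$; conversely, every factor map $\pi:(X,T)\rightarrow(Y,S)$ yields the relation $R_{\pi}=\{(x,x')\,:\,\pi x=\pi x'\}\in ICER^{+}(X)$; and under this correspondence the image system is invertible precisely when the relation lies in $ICER(X)$ (this is the content of the result of Kami\'{n}ski, Siemaszko and Szyma\'{n}ski cited in the paper). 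Determinism, i.e.\ invertibility of every factor, is then exactly the statement $ICER^{+}(X)=ICER(X)$.

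Even judged as an attempt at Proposition \ref{pro:positive-powers}, your argument is incomplete. The step you yourself call ``the technical heart'' --- deducing $(T^{n}\times T^{n})(R)=R$ for the original relation $R$ from full $(T\times T)$-invariance of the enlarged relation $\tilde R$ --- is only announced, not carried out, and it is not clear it can be: passing to the closed equivalence closure of $\bigcup_{k=0}^{n-1}(T^{k}\times T^{k})(R)\boxplus(T^{k}\times T^{k})(R)$ may introduce many new pairs, and invariance of $\tilde R$ only produces preimage chains inside $\tilde R$, with no mechanism forcing them back into $R\boxplus R$. For comparison, the paper proves that proposition without any $ICER$ bookkeeping, using recurrence alone: TD of the product forces every point of $X\times X$ to be forward recurrent (Lemma \ref{lem:TD-implies-recurrence}), forward recurrence passes to powers (Lemma \ref{lem:forward-recurrence-of-powers}), and recurrence of all pairs gives TD of $(X,T^{n})$ (Lemma \ref{lem:recurrence-in-XxX-implies-TD}).
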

A point $x\in X$ is forward recurrent if there is a sequence $n_{k}\rightarrow\infty$
such that $T^{n_{k}}x\rightarrow x$. Clearly if every point in $X\times X$
is $T\times T$ forward-recurrent then every forward invariant subset
of $X\times X$ is invariant, and in particular $ICER^{+}(X)=ICER(X)$.
This implies:
\begin{lem}
\label{lem:recurrence-in-XxX-implies-TD}Let $(X,T)$ be a topological
dynamical system. If every point of $X\times X$ is forward-recurrent
for $T\times T$ then $(X,T)$ is TD.
\end{lem}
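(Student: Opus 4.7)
\bigskip

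\noindent\textbf{Proof proposal.} By Proposition \ref{pro:ICER-characterization}, it suffices to show that under the stated hypothesis we have $ICER^{+}(X)=ICER(X)$. Since the inclusion $ICER(X)\subseteq ICER^{+}(X)$ is immediate, the plan is to fix an arbitrary $R\in ICER^{+}(X)$ and verify that $T\times T(R)=R$. Forward invariance gives $T\times T(R)\subseteq R$ for free, so the only thing to prove is the reverse inclusion $R\subseteq T\times T(R)$.

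Fix $(x,y)\in R$. By hypothesis the point $(x,y)\in X\times X$ is forward-recurrent for $T\times T$, so there exists a sequence $n_{k}\to\infty$ with $(T\times T)^{n_{k}}(x,y)\to(x,y)$. Without loss of generality $n_{k}\geq 1$, and I would then look at the points
\[
 (x_{k},y_{k}):=(T\times T)^{n_{k}-1}(x,y).
\]
Forward invariance of $R$ together with $(x,y)\in R$ gives $(x_{k},y_{k})\in R$ for every $k$. Using compactness of $X\times X$, pass to a subsequence along which $(x_{k},y_{k})\to(x',y')$; because $R$ is closed, $(x',y')\in R$. By continuity of $T\times T$,
\[
 T\times T(x',y')=\lim_{k}\,(T\times T)(x_{k},y_{k})=\lim_{k}(T\times T)^{n_{k}}(x,y)=(x,y),
\]
so $(x,y)\in T\times T(R)$ as required.

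There is essentially no obstacle: the argument is a one-line compactness-plus-continuity observation, and the only substantive ingredient beyond that is the already-recorded bijection between factors and members of $ICER^{+}(X)$ (Proposition \ref{pro:ICER-characterization}). The lemma is really the statement that forward recurrence everywhere upgrades every closed forward-invariant set to a fully invariant one, specialized to $T\times T$ acting on $X\times X$ and then combined with the ICER characterization of TD.
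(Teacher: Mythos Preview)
Your argument is correct and is exactly the approach the paper takes: the paper simply remarks, before stating the lemma, that if every point of $X\times X$ is forward recurrent for $T\times T$ then every (closed) forward-invariant subset of $X\times X$ is invariant, hence $ICER^{+}(X)=ICER(X)$, and then invokes Proposition~\ref{pro:ICER-characterization}. You have written out in full the compactness-and-continuity step that the paper leaves implicit.
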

This is the main condition used to establish that a system is TD.
We shall see that it is not in fact equivalent to TD, see Section
\ref{sec:proof-of-product}. However, there is a partial converse:
\begin{lem}
\label{lem:TD-implies-recurrence}If $(X,T)$ is deterministic then
every point in $X$ is forward recurrent for $T$.\end{lem}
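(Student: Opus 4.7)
The plan is to prove the contrapositive: if some $x_{0}\in X$ is not forward recurrent for $T$, then by Proposition \ref{pro:ICER-characterization} it suffices to exhibit an $R\in ICER^{+}(X)\setminus ICER(X)$.

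The construction I have in mind starts by setting $K=\omega_{T}(x_{0})$ and $A=\overline{\{T^{n}x_{0}:n\geq 0\}}$. Non-recurrence of $x_{0}$ says precisely that $x_{0}\notin K$. Since periodic points are automatically forward recurrent, non-recurrence also forces $x_{0}$ to be aperiodic, so $x_{0}\neq T^{n}x_{0}$ for every $n\geq 1$. Because $A=\{T^{n}x_{0}:n\geq 0\}\cup K$, these two observations combine to give
\[ T(A) \;=\; \{T^{n}x_{0}:n\geq 1\}\cup K \;=\; A\setminus\{x_{0}\}, \]
so $A$ is a closed $T$-forward-invariant set on which $T$ fails to be surjective.

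Next I take $R=\Delta_{X}\cup(A\times A)$, the equivalence relation whose unique nontrivial class is $A$. It is closed and an equivalence relation because $A$ is closed, and forward invariance $T\times T(R)\subseteq R$ is immediate: $T\times T(\Delta_{X})\subseteq\Delta_{X}$ since $T$ is a self-map of $X$, and $T\times T(A\times A)=T(A)\times T(A)\subseteq A\times A$. Thus $R\in ICER^{+}(X)$.

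To finish I will show $R\notin ICER(X)$ by picking any $y\in A\setminus\{x_{0}\}$ and checking that $(x_{0},y)\in R$ has no $T\times T$-preimage in $R$: it cannot come from $\Delta_{X}$ because $x_{0}\neq y$, and it cannot come from $A\times A$ because its first coordinate $x_{0}$ lies outside $T(A)$. The main (really, the only) subtle point, and the unique place where non-recurrence is used, is the verification that $x_{0}\notin T(A)$; this requires both $x_{0}\notin K$ and aperiodicity of $x_{0}$, and these are exactly the two pieces of information that non-recurrence supplies. The rest of the argument is formal manipulation of closed equivalence relations.
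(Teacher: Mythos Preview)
Your proof is correct and follows essentially the same route as the paper: the paper's set $X_{0}$ is exactly your $A$, and the paper's relation $R$ is exactly your $\Delta_{X}\cup(A\times A)$. You supply more detail than the paper (in particular, you make explicit the use of aperiodicity of $x_{0}$ and the fact $T(A)=A\setminus\{x_{0}\}$, which the paper leaves to the reader), but the argument is the same.
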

\begin{proof}
Suppose $x\in X$ is not forward recurrent. Set \[
X_{0}=\{T^{n}x\,:\, n\geq0\}\cup\omega_{T}(x)\]
It is easily checked that $X_{0}$ is a closed and forward-invariant
but not invariant subset of $X$. Let \[
R=\{(x',x'')\,:\, x',x''\in X_{0}\}\cup\{(x,x)\,:\, x\in X\}\]
Then $R\in ICER^{+}$ but $R\notin ICER$. Hence $(X,T)$ is not TD.\end{proof}
\begin{lem}
\label{lem:forward-recurrence-of-powers}If $x$ is forward recurrent
for $T$ then $x$ is forward recurrent for $T^{n}$ for every $n\geq0$.\end{lem}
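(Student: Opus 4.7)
The plan is to extract, from a forward-recurrence sequence for $T$, a subsequence of times divisible by $n$. The cases $n=0,1$ are trivial, so assume $n\geq 2$. Fix a sequence $n_k\to\infty$ with $T^{n_k}x\to x$, and by pigeonhole pass to a subsequence on which all $n_k$ share a common residue $r$ modulo $n$. If $r=0$ we are done: the $n_k$ are already multiples of $n$ and witness forward recurrence for $T^n$.

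For $r\neq 0$ the key step is the following claim, proved by induction on $j$: for every $\varepsilon>0$ and every $N$ one can choose indices $k_1,\ldots,k_j$ with $\min_i n_{k_i}\geq N$ and
\[
d\bigl(T^{n_{k_1}+\cdots+n_{k_j}}x,\; x\bigr)<\varepsilon.
\]
The base case $j=1$ is the definition of forward recurrence. For the inductive step I would first pick $k_{j+1}$ with $n_{k_{j+1}}\geq N$ and $d(T^{n_{k_{j+1}}}x,x)<\varepsilon/2$; then use continuity of the single, fixed map $T^{n_{k_{j+1}}}$ to choose $\delta>0$ so that $d(y,x)<\delta$ implies $d(T^{n_{k_{j+1}}}y,\,T^{n_{k_{j+1}}}x)<\varepsilon/2$; then apply the inductive hypothesis with $\delta$ in place of $\varepsilon$ to obtain $k_1,\ldots,k_j$; the triangle inequality closes the step.

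Specializing the claim to $j=n$, the exponent $n_{k_1}+\cdots+n_{k_n}$ is congruent to $nr\equiv 0\pmod n$ and, since each summand is at least $N$, is at least $nN$. Writing it as $nm$ one obtains $T^{nm}x$ arbitrarily close to $x$ for arbitrarily large $m$, which is forward recurrence of $x$ under $T^n$.

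The main obstacle I anticipate is the combinatorial bookkeeping of the residue class: one must \emph{close up} the residue by summing $n$ return times, which forces the use of the iterated continuity argument rather than a one-shot extraction. The analytic side is benign — each invocation of continuity is for a single fixed exponent $n_{k_{j+1}}$, so no uniform modulus along the growing sequence $(n_k)$ is needed, and the induction terminates after exactly $n$ steps.
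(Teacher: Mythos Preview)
Your argument is correct. The inductive claim is sound: at step $j+1$ you fix the last exponent first, invoke continuity of that single iterate to convert an $\varepsilon/2$-target into a $\delta$-target, and then call the hypothesis; the $N$-constraint carries over because you impose it separately on $k_{j+1}$ and on the inductive choice of $k_1,\ldots,k_j$. Specializing to $j=n$ indeed forces the total exponent to be $\equiv nr\equiv 0\pmod n$, and letting $\varepsilon\to 0$, $N\to\infty$ produces the recurrence sequence for $T^{n}$.

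Your route differs from the paper's. The paper argues by minimal counterexample via $\omega$-limit sets: assuming $N$ is least with $x\notin\omega_{T^{N}}(x)$, the decomposition $\omega_{T}(x)=\bigcup_{k=0}^{N-1}\omega_{T^{N}}(T^{k}x)$ yields some $0<M<N$ with $T^{M}x\in\omega_{T^{N}}(x)$, and then one bootstraps $T^{kM}x\in\omega_{T^{N}}(x)$ for all $k$, forcing $\omega_{T^{M}}(x)\subseteq\omega_{T^{N}}(x)$ and hence $x\notin\omega_{T^{M}}(x)$, contradicting minimality. Your approach is more hands-on and constructive: you manufacture return times divisible by $n$ directly by summing $n$ return times of a fixed residue class, using only continuity of finitely many fixed iterates. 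The paper's argument is slicker once the $\omega$-limit machinery is in hand and gives a bit more structural information (the inclusion $\omega_{T^{M}}(x)\subseteq\omega_{T^{N}}(x)$), while yours avoids that machinery entirely and makes the combinatorics of residues explicit.
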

\begin{proof}
Denote by $\omega_{f}(y)$ the $\omega$-limit set of a point $y$
under a map $f$. Assuming the contrary, let $N$ be the least natural
number such that $x$ is not forward recurrent for $(X,T^{N})$, i.e.
$x\notin\omega_{T^{N}}(x)$ but $x\in\omega_{T^{n}}(x)$ for all $1\leq n<N$.
Since \[
\omega_{T}(x)=\bigcup_{k=0}^{N-1}\omega_{T^{N}}(T^{k}x)\]
there is some $0<r<N$ for which $x\in\omega_{T^{N}}(T^{r}x)$, or
equivalently $T^{M}x\in\omega_{T^{N}}(x)$, where $M=N-r$. Hence
$\omega_{T^{N}}(T^{M}x)\subseteq\omega_{T^{N}}(x)$. Since $T^{M}$
is an endomorphism of $(X,T)$, it follows from $T^{M}x\in\omega_{T^{N}}(x)$
that \[
T^{2M}x=T^{M}(T^{M}x)\in\omega_{T^{N}}(T^{M}x)\subseteq\omega_{T^{N}}(x)\]
and by induction $T^{kM}x\in\omega_{T^{N}}(x)$ for every $k\geq0$,
so $\omega_{T^{M}}(x)\subseteq\omega_{T^{N}}(x)$. Hence $x\notin\omega_{T^{M}}(x)$.
But $0<M<N$, contradicting the definition of $N$.
\end{proof}

\begin{proof}
[Proof of Proposition \ref{pro:positive-powers}] Suppose $(X\times X,T\times T)$
is TD; we wish to show that $(X,T^{n})$ is TD for all $n\geq1$. 

If $(X\times X,T\times T)$ is TD then, by \ref{lem:TD-implies-recurrence},
every point in $X\times X$ is forward recurrent for $T$. Hence,
by the last lemma, for every $n\geq1$, every point in $X\times X$
is forward recurrent for $(T\times T)^{n}$. Thus by Lemma \ref{lem:recurrence-in-XxX-implies-TD},
$(X,T^{n})$ is deterministic.
\end{proof}

\section{\label{sec:Proof-of-inverse}Proof of Theorem \ref{thm:inverse}}

We construct a deterministic system $(X,T)$ such that $(X,T^{-1})$
is not deterministic. 

A system $(X,T)$ is pointwise rigid if there exists a sequence $(n_{k})_{k=1}^{\infty}\subseteq\mathbb{N}$
such that $T^{n_{k}}x\rightarrow x$ for every $x\in X$. Clearly
this implies that $(X\times X,T\times T)$ is also pointwise rigid
and that every point in $X\times X$ is forward recurrent, so by Lemma
\ref{lem:recurrence-in-XxX-implies-TD} $(X,T)$ is TD. We shall construct
a pointwise rigid system such that $(X,T^{-1})$ contains a fixed
point $x_{0}$ and a point $x_{0}\neq x\in X$ such that $T^{-n}x\rightarrow x_{0}$;
thus $x$ is not forward recurrent for $T^{-1}$ so $(X,T^{-1})$
is not deterministic. Note that this also shows that $(X,T^{-1})$
is not pointwise rigid, even though $(X,T)$ is. A similar construction
appears in \cite{AuslanderGlasnerWeiss2007}.

Write $I=[0,1]$. Let $\mathbb{=N}=\{1,2,\ldots\}$ and endow $I^{\mathbb{N}}$
with the product topology. Write $x(i)$ for the $i$-th coordinate
of $x\in I^{\mathbb{N}}$ and let $T$ denote the shift map on $I^{\mathbb{N}},$
i.e. $(Tx)(i)=x(i+1)$.

We aim to construct a point $x\in I^{\mathbb{N}}$ and a sequence
$(n_{k})_{k=1}^{\infty}$, $n_{k}\rightarrow\infty$, such that 
\begin{enumerate}
\item $0^{k}1$ appears in $x$ for arbitrarily large $k$,
\item If $ab_{1},\ldots,b_{k+1}$ appears in $x$ for some symbols $a,b_{i}\in[0,1]$
and $b_{i}\leq\varepsilon$ for $i=1,\ldots,k$ then $a\leq\varepsilon+\frac{1}{k}$,
\item If $y=T^{m}x$ and $y(1)\ldots y(k)\neq0\ldots0$ then $|T^{n_{k}}y(i)-y(i)|<1/k$
for $i=1,\ldots,k$.
\end{enumerate}
Assuming we have constructed such a point $x$, take $X\subseteq[0,1]^{\mathbb{Z}}$
to be the bilateral extension of the orbit closure of $x$, that is,
the set of $y\in I^{\mathbb{Z}}$ such that every finite subword of
$y$ appears in some accumulation point of $\{T^{k}x\}_{k=1}^{\infty}$.
Condition (1) implies that the fixed point $\overline{0}=\ldots000\ldots$
is in $X$ and that there is a point $y=\ldots0001y'$ in $X$ for
some $y'\in I^{\mathbb{N}}$. Clearly the backward orbit of $y$ under
the shift converges to $\overline{0}$. Condition (3) implies that
if $z\in X$ is not forward-asymptotic to $\overline{0}$ then $T^{n(k)}z\rightarrow z$.
Finally, (2) guarantees that the only point which is forward asymptotic
to $\overline{0}$ is $\overline{0}$ itself: indeed, if $z$ is asymptotic
to $\underline{0}$ then, for every $\varepsilon>0$, there is an
$i_{0}$ such that $z(i)<\varepsilon$ for every $i>i_{0}$, and it
follows from this that $z(i)\leq\varepsilon$ for every $i\leq i_{0}$
as well, and consequently $z=\overline{0}$. Since $\overline{0}$
is a fixed point, (1)-(3) imply that $(X,T)$ is pointwise rigid. 

The definition of $x$ is by induction. Start the induction with $n_{1}=3$
and $x^{1}=100$. 

At the $m$-th stage of the construction we will have defined $n_{1},\ldots,n_{m}\in\mathbb{N}$
and $x^{m}=x(1)\ldots x(n_{m})$ and the final $m+1$ letters of $x^{m}$
will be $0$. 

Suppose this is the case; we must define $n_{m+1}$ and $x^{m+1}$.
For $t\in[0,1]$ let $t\cdot x^{m}$ for the pointwise product, i.e.
$(t\cdot x^{m})(i)=t\cdot x^{m}(i)$. Note that $0\cdot x^{m}=00\ldots0$.
Also write $ab$ for the concatenation of $a$ and $b$. Define \[
x^{m+1}=x^{m}x^{m}(\frac{m}{m+1}\cdot x^{m})(\frac{m-1}{m+1}\cdot x^{m})\ldots(\frac{1}{m+1}\cdot x^{m})(0\cdot x^{m})\]
and let $n_{m+1}$ be the length of $x^{m+1}$ (so by induction $n+m+1=(m+2)n_{m}$,
and in particular $n_{m}\geq m$). 

Each $x^{m}$ thus begins with a $1$ and ends with $0^{n_{m}}$,
and since $x^{m+1}$ begins with $x^{m}x^{m}$ condition (1) of the
construction holds. 

To verify (2), proceed by induction. It holds for subwords of $x^{1}$.
Suppose $ab_{1}\ldots b_{k+1}$ belongs to $x^{n+1}$. If $ab_{1}\ldots b_{k+1}$
belongs to one of the $t\cdot x^{n}$'s from which $x^{n+1}$ is constructed
then we are done by the induction hypothesis. Otherwise one of the
$b_{i}$'s is the first symbol of one of the $tx^{n}$'s. Let $b_{i}$
be the first of these and $t=\frac{r}{n+1}$; the fact that $b_{i}<\varepsilon$
means that $\frac{r}{n+1}<\varepsilon$. Hence $a$ belongs to $\frac{r+1}{n+1}x^{n}$,
so $a\leq\frac{r+1}{n+1}\leq\varepsilon+\frac{1}{n+1}$. 

For (3), we claim that for each $m$ and $k<m$ if $0\leq i<n_{m}-n_{k}$
and $x^{m}(i),\ldots,x^{m}(i+k-1)\neq0$ then $|x^{m}(i)-x^{m}(i+n_{k}-1)|<1/k$.
The proof is by induction on $m$, using the fact that if $y$ satisfies
this condition then so does $t\cdot y$ for $t\in[0,1]$. Specifically,
let $m,k,i$ as above. If $k=m-1$ the proof is immediate from the
construction. Otherwise write $x^{m}=y_{1}\ldots y_{m+2}$ with $y_{j}=t_{j}x^{m-1}$
as in the definition. Let $i=s\cdot n_{m-1}+i'$ for $s,i'\in\{0,1,\ldots,n_{m-1}-1\}$.
If $0\leq i'\leq n(m-1)-k$ we can apply the induction hypothesis.
Otherwise, $i'$ is in the final $0^{n_{m-2}}$-block of $y_{s}$
so the assumption that $x^{m}(i),\ldots,x^{m}(i+n_{k}-1)\neq0$ implies
that $i'>n_{m-1}-k$. But now note that $y_{s+1}=x^{k}z$ for some
$z$, so $y_{s+1}(n_{k}-i')=0$ because the final $k$ letters of
$x^{k}$ are $0$. So $x^{m}(i)=x^{m}(i+n_{k})=0$ and we are done.

\section{\label{sec:proof-of-product}Proof of Theorem \ref{thm:product}}

We shall construct a system $(X,T)$ which is TD, but $(X\times X,T\times T)$
is not TD. To establish the first property, we rely on the following
result:
\begin{lem}
Suppose $(X,T)$ has the property that for every $(x',x'')\in X\times X$,
either $(x',x'')$ is forward recurrent for $T\times T$ or else there
is a $p\in X$ such that $(x',p),(p,x'')\in\omega_{T\times T}(x',x'')$.
Then $(X,T)$ is deterministic.\end{lem}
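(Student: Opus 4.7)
The plan is to apply Proposition \ref{pro:ICER-characterization}: it suffices to show that every $R \in ICER^+(X)$ satisfies $R \subseteq T \times T(R)$, since forward invariance gives the reverse inclusion automatically.

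Before the main step I would establish that the hypothesis forces $T$ to be injective, hence a homeomorphism. Indeed, for any $a, b \in X$ with $Ta = Tb$, every iterate $(T \times T)^k(a,b)$ with $k \geq 1$ lies on the diagonal of $X \times X$, so $\omega_{T \times T}(a,b)$ is contained in the diagonal. Whether $(a,b)$ itself is forward recurrent (in which case $(a,b) \in \omega_{T \times T}(a,b)$) or the hypothesis supplies $p$ with $(a,p),(p,b) \in \omega_{T\times T}(a,b)$, in either case the resulting element(s) of the diagonal force $a = b$.

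With $T^{-1}$ in hand, the main step is: given $R \in ICER^+(X)$ and $(x,y) \in R$, apply the hypothesis to $(T^{-1}x, T^{-1}y)$. The key observation is that $\omega_{T \times T}(T^{-1}x, T^{-1}y) \subseteq R$, because for $k \geq 1$ the point $(T \times T)^k(T^{-1}x, T^{-1}y) = (T \times T)^{k-1}(x,y)$ lies in $R$ by forward invariance, and $R$ is closed. If $(T^{-1}x, T^{-1}y)$ is forward recurrent, it lies in its own $\omega$-limit set and hence in $R$. Otherwise the hypothesis provides $p \in X$ with $(T^{-1}x, p), (p, T^{-1}y) \in \omega_{T \times T}(T^{-1}x, T^{-1}y) \subseteq R$, and transitivity of the equivalence relation $R$ forces $(T^{-1}x, T^{-1}y) \in R$. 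In either case applying $T \times T$ yields $(x,y) \in T \times T(R)$, completing the argument.

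The subtlest point is the preliminary injectivity step: the natural move is to take a $(T \times T)$-preimage of $(x,y)$ and locate it in $R$, but for this we genuinely need $T$ to be invertible --- a property not postulated in the hypothesis but hidden inside it, extracted by the small diagonal-collapse observation above. Once this is settled, what remains is a short manipulation of $\omega$-limit sets together with the transitivity of $R$.
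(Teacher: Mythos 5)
Your proof is correct, but it takes a somewhat different route from the paper's, and the difference is worth noting. The paper argues directly with the pair $(x',x'')\in R$: since $(T\times T)^{n}(x',x'')\in T\times T(R)$ for all $n\geq1$ and $T\times T(R)$ is closed by compactness, one gets $\omega_{T\times T}(x',x'')\subseteq T\times T(R)$, and then either forward recurrence or the point $p$ places $(x',x'')$ in $T\times T(R)$, the second case invoking the claim that $T\times T(R)$ is itself an equivalence relation; the conclusion $R\subseteq T\times T(R)$ then gives $R\in ICER$ via Proposition \ref{pro:ICER-characterization}, exactly as you do. You instead first observe that the hypothesis forces $T$ to be injective (your diagonal-collapse argument is correct: for $Ta=Tb$ the whole forward orbit of $(a,b)$ from time $1$ on lies on the closed diagonal, so both alternatives of the hypothesis force $a=b$), hence a homeomorphism, and then pull $(x,y)\in R$ back to $(T^{-1}x,T^{-1}y)$, showing $\omega_{T\times T}(T^{-1}x,T^{-1}y)\subseteq R$ and applying transitivity of $R$ itself before pushing forward by $T\times T$. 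What each approach buys: the paper's version is shorter and never mentions invertibility, but its appeal to transitivity of $T\times T(R)$ is not automatic --- for a non-injective onto map the image of an equivalence relation under $T\times T$ need not be transitive --- so that step implicitly rests on exactly the injectivity you prove; your version applies transitivity only to $R$, which is an equivalence relation by definition, and is therefore more self-contained (at the modest cost of the preliminary homeomorphism step). In short, your argument is valid and, if anything, supplies the justification the paper's phrase ``since $TR$ is an equivalence relation'' leaves tacit.
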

\begin{proof}
It suffices to show that $ICER^{+}=ICER$. Let $R\in ICER^{+}$ and
let $(x',x'')\in R$. Since $\omega_{T\times T}(x',x'')\subseteq TR$,
if the first condition holds (i.e. if $(x',x'')\in\omega_{T\times T}(x',x'')$)
then $(x',x'')\in TR$. Otherwise there is a $p\in X$ so that $(x',p),(p,x'')\in\omega_{T\times T}(x',x'')\subseteq TR$,
and since $TR$ is an equivalence relation, this means $(x',x'')\in TR$.
We have shown that $(x',x'')\in TR$ whenever $(x',x'')\in R$, so
$R\subseteq TR$. The reverse containment holds by assumption so $R\in ICER$,
and the lemma follows.
\end{proof}
We shall construct a system containing a fixed point which will play
role of the point $p$ in the lemma, i.e. every pair $(x',x'')$ in
the system which is not forward recurrent will have $(x',p),(p,x'')\in\omega_{T\times T}(x',x'')$.
For simplicity we describe a non-transitive example, and then explain
how to modify it to get a transitive one.

Let $T$ be the shift on $[0,1]^{\mathbb{Z}}$. A block is a finite
subsequence $x\in[0,1]^{\{1,\ldots,n\}}$; here $n$ is the length
of the block. If $x,y$ are blocks of length $m,n$ respectively their
concatenation is written $xy$ and is the block $x(1)\ldots x(m)y(1)\ldots y(n)$
of length $m+n$. For $x\in[0,1]^{\mathbb{Z}}$ a sub-block is a block
of the form $x(i),x(i+1),\ldots,x(j)$; this is the block of length
$j-i+1$ appearing in $x$ at $i$. We denote this sub-block by $x(i;j)$.
We say that blocks $x_{1},x_{2}$ occur consecutively in $x$ if $x_{1}=x(i,j)$
and $x_{2}=x(j+1,k)$ for some $i\leq j<k$. 

To construct the example we define two points $x^{*},y^{*}\in[0,1]^{\mathbb{Z}}$
with $x^{*}(1)=y^{*}(1)=1$, and take $X,Y$ to be their orbit closure,
respectively. We also define sequences $m_{k}\rightarrow\infty$ and
$n_{k}\rightarrow\infty$ so that the following conditions are satisfied:
\setenumerate[1]{label=(\roman*)}
\begin{enumerate}
\item \label{cond:x-rigidity}$\Vert x^{*}-T^{m_{k}}x^{*}\Vert_{\infty}\leq\frac{1}{k}$
for $k\geq1$.
\item \label{cond:y-rigidity}$\Vert y^{*}-T^{n_{k}}y^{*}\Vert_{\infty}\leq\frac{1}{k}$
for $k\geq1$.
\item \label{cond:x-sparseness}For $k\geq1$, out of every three consecutive
blocks in $x^{*}$ of length $n_{k}$ at least two are identically
$0$. 
\item \label{cond:y-sparseness}For $k\geq1$, out of every three consecutive
blocks in $y^{*}$ of length $m_{k}$ at least two are identically
$0$. 
\item \label{cond:orthogonality}For every $k\neq0$, at least one of the
symbols $x^{*}(k)$ or $y^{*}(k)$ is equal to $0$.
\end{enumerate}
Let $X$ be the orbit closure of $x^{*}$ and $Y$ the orbit closure
of $y^{*}$. We claim that given such points $x^{*},y^{*}$ the system
$Z=X\cup Y$ is deterministic, but $Z\times Z$ is not. Indeed, the
latter statement follows from the observation that by condition \ref{cond:orthogonality}
and the fact that $x(0)=y(0)=1$, the pair $(x^{*},y^{*})\in Z\times Z$
is not forward recurrent for $T\times T$, so $Z\times Z$ is not
deterministic.

To see that $Z$ is deterministic, note that the properties \eqref{cond:x-rigidity}--\eqref{cond:y-sparseness}
above hold when $x^{*},y^{*}$ is replaced by any pair $x\in X,y\in Y$.
Condition \eqref{cond:x-rigidity} now implies that $T^{m_{k}}|_{X}\rightarrow\textrm{Id}_{X}$
uniformly, and similarly \eqref{cond:y-rigidity} implies that $T^{n_{k}}|_{Y}\rightarrow\textrm{Id}_{Y}$
uniformly, and in particular every pair in $X$ is forward recurrent
for $T\times T$ and so is every pair from $Y$. For $x\in X,y\in Y$,
conditions \eqref{cond:x-rigidity} and \eqref{cond:y-sparseness}
imply that there is a choice of $r(k)\in\{1,2,3\}$ so that $T^{r(k)m_{k}}x\rightarrow x$
but $T^{r(k)m_{k}}y\rightarrow\overline{0}$, and hence $(x,\overline{0})\in\omega_{T\times T}(x,y)$.
Similarly \eqref{cond:y-rigidity} and \eqref{cond:x-sparseness}
imply that there is a choice $s(k)\in\{1,2,3\}$ so that $T^{s(k)n_{k}}x\rightarrow\overline{0}$
but $T^{s(k)n_{k}}y\rightarrow y$, so also $(\overline{0},y)\in\omega_{T\times T}(x,y)$.
From the lemma it now follows that $Z=X\cup Y$ is deterministic.

Here are the details of the construction. We proceed by induction
on $r$. At the $r$-th stage we will be given an integer $L(r)\geq r-1$
and finite sequences $x_{r},y_{r}\in[0,1]^{\{-L(r),-L(r)+1,\ldots,L(r)\}}$,
and if $r\geq2$ we are also given integers $m_{r-1},n_{r-1}$ . We
extend $x_{r}$ to $x_{r+1}$ and $y_{r}$to $y_{r+1}$ without changing
the symbols already defined. The blocks $x_{r},y_{r}$ will satisfy
the following versions of the conditions above, and an additional
condition which is required for the induction: \setenumerate[1]{label=(\Roman*)}
\begin{enumerate}
\item \label{cond:finite-x-rigidity}$\Vert x_{r}(i;i+k)-x_{r}(i+m_{k};i+m_{k}+k)\Vert_{\infty}\leq\frac{1}{k}$
for $1\leq k\leq r-1$ and $-L(r)\leq i\leq L(r)-m_{k}-k$.
\item \label{cond:finite-y-rigidity}$\Vert y_{r}(i;i+k)-y_{r}(i+n_{k};i+n_{k}+k)\Vert_{\infty}\leq\frac{1}{k}$
for $1\leq k\leq r-1$ and $-L(r)\leq i\leq L(r)-n_{k}-k$.
\item \label{cond:finite-x-sparseness}For $1\leq k\leq r-1$, out of every
three consecutive blocks in $x_{r}$ of length $n_{k}$ at least two
are identically $0$. 
\item \label{cond:finite-y-sparseness}For $1\leq k\leq r-1$, out of every
three consecutive blocks in $y_{r}$ of length $m_{k}$ at least two
are identically $0$. 
\item \label{cond:finite-orthogonality}For every $k\neq0$ between $-L(r)$
and $L(r)$, at least one of the symbols $x_{r}(k)$ or $y_{r}(k)$
are equal to $0$.
\item \label{cond:zero-tails}$m_{k},n_{k}\leq L(r-1)$ for each $1\leq k\leq r-1$,
and the first and last $2L(r-1)$ symbols of $x_{r}$ and $y_{r}$
are $0$. 
\end{enumerate}
Assuming that such a sequence $x_{r},y_{r}$ exists, define $x^{*},y^{*}\in[0,1]^{\mathbb{Z}}$
by $x^{*}(i)=x_{i+1}(i)$ and $y^{*}(i)=y_{i+1}(i)$. It is straightforward
to verify that these conditions guarantee that $x^{*},y^{*}$have
the desired properties.

We start the induction by $L(1)=0$ and $x_{1}(0)=y_{1}(0)=1$; all
conditions are satisfied trivially.

For some $r\geq1$ suppose we are given $x_{r},y_{r},L(r)$ and also
$m_{k},n_{k}$ for $0\leq k<r$, such that \eqref{cond:finite-x-rigidity}-\eqref{cond:zero-tails}
are satisfied. For a block $z$ and $\alpha\in[0,1]$, denote by $\alpha\cdot z$
the block with $(\alpha z)(i)=\alpha\cdot z(i)$. 

Let $s,t,s',t'$ be integers which we shall specify later. Let $u$
and $v$ be blocks of $0$'s of length $s,t$, respectively, and set
\[
x_{r+1}=v\,(\frac{1}{r+1}\cdot x_{r})\, u\,\ldots\, u\,(\frac{r}{r+1}\cdot x_{r})\, u\, x_{r}\, u\,(\frac{r}{r+1}\cdot x_{r})\, u\,(\frac{r-1}{r+1}\cdot x_{r})\, u\,\ldots\, u\,(\frac{1}{r+1}\cdot x_{r})\, v\]
Let $u',v'$ to be blocks of $0$'s of length $s',t'$ respectively,
and set\[
y_{r+1}=v'\,(\frac{1}{r+1}\cdot y_{r})\, u'\ldots u'\,(\frac{r}{r+1}\cdot y_{r})\, u'\, y_{r}\, u'\,(\frac{r}{r+1}\cdot y_{r})\, u'\,(\frac{r-1}{r+1}\cdot y_{r})\, u'\ldots u\,(\frac{1}{r+1}\cdot y_{r})\, v'\]
Note that in defining $x_{r+1},y_{r+1}$ we have added blocks to the
left and right of the central copy of $x_{r},y_{r}$, respectively,
without changing the central blocks. We will assume that $s,t,s',t'$
are chosen so that the lengths of $x_{r+1},y_{r+1}$ are equal,. We
define $L(r+1)$ to be their common length. See figure \ref{fig:construction}.

\begin{figure} \input{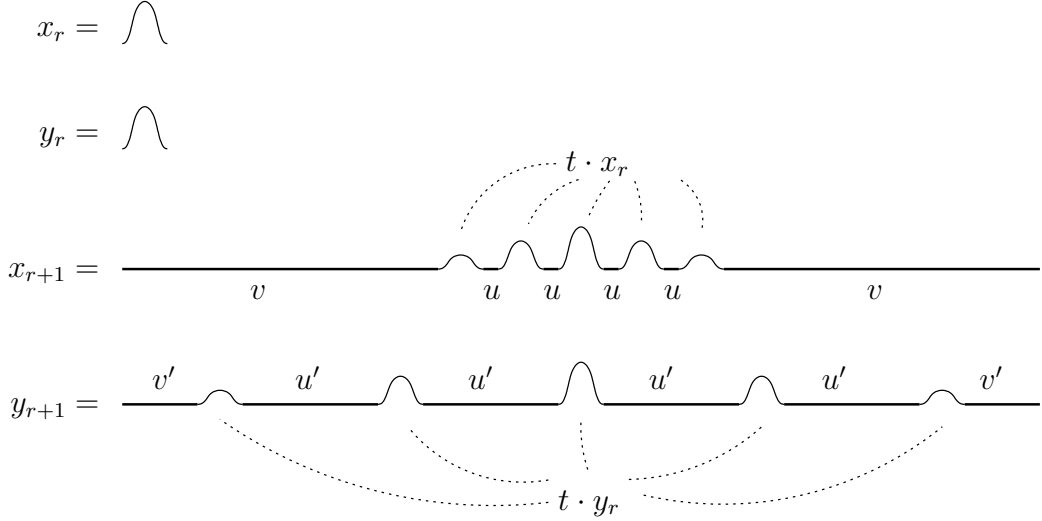} \caption{The construction of $x_{r+1},y_{r+1}$ form $x_r,y_r$} (schematic) \label{fig:construction} \end{figure}

By condition \eqref{cond:zero-tails}, $x_{r+1}$ and $y_{r+1}$ satisfy
\eqref{cond:finite-x-rigidity} and \eqref{cond:finite-y-rigidity}
for $r+1$ and $1\leq k<r$. More precisely, suppose that $1\leq k<r$
and $L(r+1)\leq i\leq L(r+1)-m_{k}+1$, and consider the blocks of
length $k$ in $x_{r+1}$ at positions $i$ and $i+m_{k}$. There
are two possibilities. Either both blocks are located inside the same
copy of $t\cdot x_{r}$ for some $t$, in which case $\Vert x_{r}(i;i+k)-x_{r}(i+m_{k};i+m_{k}+k)\Vert_{\infty}\leq\frac{1}{k}$
by the induction hypothesis, or else at least one is located in an
$u$ and the other either in the first or last $m_{r}$ symbols of
a block of the form $t\cdot x_{r}$. In both of the last possibilities,
the blocks are blocks of $0$'s (because $u$ is all $0$'s and because
of condition \eqref{cond:zero-tails} of the induction hypothesis)
so $\Vert x_{r}(i;i+k)-x_{r}(i+m_{k};i+m_{k}+k)\Vert_{\infty}\leq\frac{1}{k}$
is satisfied trivially. The analysis for $y_{r+1}$ is similar.

Define $m_{r}=L(r)+s$. Then $x_{r+1}$ also satisfies condition \eqref{cond:finite-x-rigidity}
for $k=r$, because every two symbols in $x_{r+1}$ whose distance
is $L(r)+s$ belong to blocks of the form $\frac{i}{r+1}\cdot x_{r}$
and $\frac{i\pm1}{r+1}\cdot x_{r}$, and so differ in value by at
most $\frac{1}{r+1}$. Similarly, if we define $n_{r}=L(r)+s'$ then
$y_{r+1}$ satisfies \eqref{cond:finite-y-rigidity} for $k=r$. 

If we choose $s,t,s',t'$ large enough, conditions \eqref{cond:finite-x-sparseness},\eqref{cond:finite-y-sparseness}
hold for $x_{r+1},y_{r+1}$. The same is true also for \eqref{cond:zero-tails}.

It remains to obtain \eqref{cond:finite-orthogonality}. We still
have freedom to choose $s,s',t,t'$ subject to the restriction that
$x_{r+1},y_{r+1}$ have the same length, and as long as they are large
enough. We first fix $s$ some arbitrarily sufficiently large number
(this determines the value of $m_{k}$). Next, we select $s'$ large
enough so that each non-zero component of $x_{r+1}$ is opposite the
central block $0^{s'}\, y_{r}\,0^{s'}$ in $y_{r+1}$ (here $0^{m}$
is the word consisting of $m$ zeros); this implies also that each
non-zero symbol in $y_{r+1}$ outside of the central block $y_{r}$
is opposite a $0$ in $x_{r+1}$. This and the induction hypothesis
guarantees that \ref{cond:finite-orthogonality} holds. It remains
only to note that although $t$ determines $t'$, we can still make
each as large as we want. This completes the construction.

To give a transitive example, one adds an intermediate step between
each step of the construction above. Given $x_{r},y_{r}$ one forms
the blocks\begin{eqnarray*}
x'_{r} & = & by_{r}ax_{r}ay_{r}b\\
y'_{r} & = & dx_{r}cy_{r}cx_{r}d\end{eqnarray*}
where $a,b,c,d$ are sufficiently long blocks of $0$'s chosen so
that $x'_{r},y'_{r}$ have the same length $L'(r)$ and condition
\eqref{cond:finite-orthogonality} holds for $x'_{r},y'_{r}$. Now
carry out the induction step above obtaining $x_{r+1},y_{r+1}$ from
$x'_{r},y'_{r}$. Conditions \eqref{cond:finite-x-rigidity},\eqref{cond:finite-y-rigidity}
no longer hold but a modified version does, in which we replace given
a block of length $1\leq k\leq r-1$ in $x_{r}$ or $y_{r}$, it repeats
with accuracy $1/k$ at distance either $m_{k}$ or $n_{k}$. The
points $x^{*},y^{*}$ will now be transitive for $Z$, and an argument
similar to the above will show that $Z$ is deterministic but $Z\times Z$
is not. 

Finally, note that not every point in $X\times X$ is forward recurrent
but $X$ is TD. This shows that Lemma \ref{lem:recurrence-in-XxX-implies-TD}
is only a sufficient condition for TD, not necessary condition.

\bibliographystyle{plain}
\bibliography{bib}

\end{document}